\author[C.~Sanna]{Carlo Sanna$^\dagger$}
\thanks{$\dagger\,$C.~Sanna is a member of GNSAGA of INdAM and of CrypTO, the group of Cryptography and Number~Theory of Politecnico di Torino}
\address{\parbox{\linewidth}{
Politecnico di Torino, Department of Mathematical Sciences\\
Corso Duca degli Abruzzi 24, 10129 Torino, Italy\\[-8pt]}}
\email{carlo.sanna.dev@gmail.com}
\keywords{additive basis; Niven number; sum of digits}
\subjclass[2010]{Primary: 11B13, Secondary: 11A63}
\title{Additive bases and Niven numbers}
\DeclareMathAlphabet{\curly}{U}{rsfs}{m}{n}
\newtheorem{thm}{Theorem}[section]
\newtheorem{cor}[thm]{Corollary}
\newtheorem{lemma}[thm]{Lemma}
\theoremstyle{remark}
\newtheorem{rem}{Remark}[section]
\renewcommand{\Re}{\operatorname{Re}}
\DeclareMathOperator{\HRH}{HRH}
\begin{document}

\maketitle

\begin{abstract}
Let $g \geq 2$ be an integer.
A natural number is said to be a \emph{base-$g$ Niven number} if it is divisible by the sum of its base-$g$ digits.
Assuming Hooley's Riemann Hypothesis, we prove that the set of base-$g$ Niven numbers is an additive basis, that is, there exists a positive integer $C_g$ such that every natural number is the sum of at most $C_g$ base-$g$ Niven numbers.
\end{abstract}

\section{Introduction}

One of the principal problems of additive number theory is to determine, given a set of natural numbers $\mathcal{A}$, if there exists a positive integer $k$ such that every natural number
(resp., every sufficiently large natural number) is the sum of at most $k$ elements of $\mathcal{A}$. 
In such a case, $\mathcal{A}$ is said to be an \emph{additive basis} (resp., an \emph{asymptotic additive basis}) \emph{of order} $k$.

Probably the most famous result of additive number theory is Lagrange's theorem, proved by Lagrange in 1770, which says that the set of perfect squares is an additive basis of order $4$.
More generally, Waring's problem asks whether the set of perfect $h$-th powers is an additive basis, which was answered in the affirmative by Hilbert in 1909.
Furthermore, in 1937, Vinogradov proved that every sufficiently large odd number is the sum of three prime numbers, which implies that the set of prime numbers is an asymptotic basis of order $4$.
For an introduction to these classic results see, e.g., Nathanson's book~\cite{MR1395371}.

Let $g \geq 2$ be an integer. 
Recently, some authors considered additive basis of sets of natural numbers whose base-$g$ representations are restricted in certain ways.
For example, Cilleruelo, Luca, and Baxter~\cite{MR3834696}, improving a result of Banks~\cite{MR3458332}, proved that, for $g \geq 5$, the set of natural numbers whose base-$g$ representations are palindrome is an additive basis of order~$3$.
Rajasekaran, Shallit, and Smith~\cite{MR4081148} showed that the same is true for $g = 3,4$ but not for $g = 2$; and they proved that the binary palindromes are an additive basis of order $4$.
Moreover, Madhusudan, Nowotka, Rajasekaran, and Shallit~\cite{MR3853973} proved that the set of natural numbers whose binary representations consist of two identical repeated blocks is an asymptotic basis of order $4$, while Kane, Sanna, and Shallit~\cite{MR4055207} gave a generalization regarding $k$ repeated blocks. 
For other results of this kind see also \cite{MR3835513, MR3855936}.

A natural number is said to be a \emph{base-$g$ Niven number} if it is divisible by the sum of its \mbox{base-$g$} digits.
De~Koninck, Doyon, and K\'atai~\cite{MR1957109}, and (independently) Mauduit, Pomerance, and S\'ark\"ozy~\cite{MR2166377}, proved that the number of base-$g$ Niven numbers not exceeding $x$ is asymptotic to $c_g x / \!\log x$, as $x \to +\infty$, where $c_g > 0$ is an explicit constant (see~\cite{MR2605530} for a generalization).
Also, De~Koninck, Doyon, and K\'atai~\cite{MR1988644, MR2439527} studied gaps between Niven numbers, and runs of consecutive Niven numbers.

Our result is the following:

\begin{thm}\label{thm:main}
Let $g \geq 2$ be an integer.
Assuming Hooley's Riemann Hypothesis, we have that the set of base-$g$ Niven numbers is an additive basis.
\end{thm}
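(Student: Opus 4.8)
The plan is to prove the ostensibly stronger fact that the base-$g$ Niven numbers form an \emph{asymptotic} basis; this suffices, since $a g^{k}$ is a Niven number for every digit $a\in\{1,\dots,g-1\}$ and every $k\ge 0$ (indeed $a\mid a g^{k}$), and $1$ is Niven, so each natural number below a fixed bound is a sum of boundedly many Niven numbers, whence an asymptotic basis of order $K$ is a genuine basis. I therefore fix a large $N$ with $n:=\lfloor\log_g N\rfloor+1$ base-$g$ digits and aim to write $N$ as a sum of $O_g(1)$ Niven numbers. The guiding principle is that one should not read Niven numbers off the digits of $N$ (a block of digits of $N$ is almost never Niven), but rather \emph{engineer} Niven numbers with a prescribed common digit sum and then solve for them. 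Note the budget constraint: if $N=\sum_{i=1}^{k}n_i$ with $s_g(n_i)=p$ for all $i$, then, since each carry lowers the base-$g$ digit sum by $g-1$, we have $s_g(N)\le\sum_i s_g(n_i)=kp$; as $s_g(N)$ may be as large as $(g-1)n$, any such representation forces $k\ge (g-1)n/p$. This points to choosing the common digit sum to be a prime $p\asymp n$, for which the order $k$ remains bounded.

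The core is a construction lemma: for a suitable prime $p\asymp n$, and for any prescribed magnitude and any prescribed low-order digits, there is a Niven number $\nu$ with $s_g(\nu)=p$, of the prescribed magnitude, and lying in the prescribed class modulo $g^{J}$. Since $s_g(\nu)=p$, the Niven condition is exactly $\nu\equiv 0\pmod p$, so I must realize $\nu=\sum_i d_i g^{e_i}$ with $\sum_i d_i=p$, prescribed low digits, and $\sum_i d_i g^{e_i}\equiv 0\pmod p$. Fixing the low digits consumes part of the digit-sum budget and pins down $\nu\bmod g^{J}$; the remaining units of digit sum are placed as isolated ones at high positions $e$, each contributing a value $g^{e}\bmod p$ in the cyclic group $\langle g\rangle\le(\mathbb{Z}/p\mathbb{Z})^{\times}$. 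By Cauchy--Davenport, sums of sufficiently many such terms cover all of $\mathbb{Z}/p\mathbb{Z}$, so the target residue $0\bmod p$ is attainable; moreover, if $g$ has multiplicative order exceeding $\sqrt p$ then boundedly few correction positions suffice and the magnitude of $\nu$ stays easy to control, the case of a primitive root being the cleanest.

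With the lemma in hand, the representation of $N$ follows by a greedy block argument. One term of the form $a g^{j}$ first reduces to the case $p\mid N$, which is possible because $\{a g^{j}\bmod p\}$ meets every residue class when $g$ has large order. Writing $N$ as a multiple of $p$, I then peel off Niven numbers of digit sum $p$ produced by the lemma, at each step reproducing a band of the leading digits of the current remainder and exploiting the freedom in the low digits and in the divisibility-correcting high positions to confine the carries to a buffer between bands. The budget $kp\ge s_g(N)$ shows that $k=O_g(1)$ bands suffice, and a short low-order remainder is absorbed by one final engineered term. The refined form of the De~Koninck--Doyon--K\'atai and Mauduit--Pomerance--S\'ark\"ozy density estimate, giving Niven numbers in short intervals, guarantees that each required term genuinely exists; the carry bookkeeping, while tedious, is routine.

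The main obstacle, and the sole point at which Hooley's Riemann Hypothesis is needed, is securing, for every large $n$, a prime $p\in[\varepsilon n,\,n]$ for which $g$ has multiplicative order exceeding $\sqrt p$ (a primitive root, when $g$ is not a perfect power). Unconditionally one cannot even ensure that a single such prime exists for a given $g$ — this is Artin's conjecture — so a supply of them in prescribed ranges is out of reach without a hypothesis. Under $\HRH$, namely the Riemann Hypothesis for the Dedekind zeta functions of the fields $\mathbb{Q}(\zeta_{\ell},g^{1/\ell})$ that Hooley used to establish Artin's conjecture, such primes have positive relative density, hence occur in $[\varepsilon n,\,n]$ for all large $n$. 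I expect every other ingredient — the Cauchy--Davenport flexibility, the carry control, and the Niven density in short intervals — to be unconditional, so that $\HRH$ enters only through the guaranteed supply of good digit-sum primes $p$, and thereby through the uniformity of the construction in $N$.
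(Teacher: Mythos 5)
Your proposal inverts the paper's architecture in an interesting way --- you want to \emph{engineer} the Niven summands from scratch, so that HRH enters only through Hooley's positive-density theorem for primes with prescribed primitive root --- but the core construction lemma on which everything rests is never proved, and the two external supports you cite for it fail as stated. First, Cauchy--Davenport does \emph{not} give ``boundedly few correction positions'' when $g$ merely has multiplicative order $t > \sqrt{p}$: the bound $|A_1 + \cdots + A_k| \geq \min\{p,\, kt - k + 1\}$ forces $k \gtrsim p/t \approx \sqrt{p}$ summands, which is unbounded (covering $\mathbb{F}_p$ by boundedly many sums from a multiplicative subgroup of size $\asymp \sqrt{p}$ is true but requires Glibichuk-type sum--product results, not CD). You are saved only because you restrict to primitive-root primes: writing $g = g_0^{2^u}$ with $g_0$ nonsquare, the order of $g$ is at least $(p-1)/2^u \geq (p-1)/g$, of bounded index, and then CD gives $O_g(1)$ corrections --- exactly the computation in the paper's Lemma~\ref{lem:root}. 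Second, no ``refined De~Koninck--Doyon--K\'atai / Mauduit--Pomerance--S\'ark\"ozy short-interval'' theorem of the shape you invoke exists: you need integers with digit sum \emph{exactly} a prescribed prime $p \asymp \varepsilon n$ --- far below the typical digit sum $\sim (g-1)n/2$, so these have exponentially small density --- divisible by $p$, with prescribed leading band and prescribed low digits; the cited papers count Niven numbers globally (dominated by central digit sums) and give nothing of this kind, so the lemma must be established by explicit digit placement. There the ``routine'' carry bookkeeping hides unaddressed constraints: since $\nu \equiv s_g(\nu) \pmod{g-1}$, your decomposition forces $N \equiv kp + (\text{digit sums of the } a g^j \text{ terms}) \pmod{g-1}$, so $k$ is not free (fixable by varying $k$ over $g-1$ values, but it must be handled); when $t \asymp p \asymp \varepsilon n$ the zero buffer must contain enough \emph{distinct} positions in each residue class mod $t$; and your preliminary reduction ``$\{a g^j \bmod p\}$ meets every residue class'' is unjustified for $u \geq 1$, since $\langle g \rangle$ has index up to $2^u$ and the digits $1, \dots, g-1$ need not represent all cosets.

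It is worth seeing why the paper takes the route it does. It never touches carries: Schnirelmann's theorem plus the positive-density, digit-regular set $\mathcal{S}_{q,r}$ (Lemma~\ref{lem:Aqr}, Theorem~\ref{thm:schnirelmann}) reduce to inputs $n$ whose digit substrings all have proportional digit sums --- disposing of sparse inputs like $g^n + 3$, which your greedy band-peeling must confront head-on --- and then the digit string of $n$ \emph{itself} is cut into three blocks (Lemma~\ref{lem:split}) and each block converted into a Niven number plus at most $8g$ powers of $g$ via the Dias~da~Silva--Hamidoune restricted sumset theorem (needed there, rather than plain CD, because the powers are subtracted at distinct existing digit positions). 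The price of this carry-free splitting is that a block's digit sum can be adjusted only by $O(g)$, so the digit sum $s'$ must be represented \emph{exactly} as $p_1 + p_2 + p_3$ with $g_0$ a primitive root modulo each $p_i$; a bounded window around $s$ cannot be reached by Hooley's density theorem alone, and this is precisely why the paper needs the full Frei--Koymans--Sofos ternary theorem (Theorems~\ref{thm:FKS1} and~\ref{thm:FKS2}) and the residue class $r' \in \mathcal{R}_{g_0}$ mod $420$. Your inversion, if completed, would genuinely bypass FKS and the mod-$420$ bookkeeping at the cost of a full carry analysis; but as written, the construction lemma is the missing proof rather than a routine verification, so the proposal has a genuine gap at its center.
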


Hooley's Riemann Hypothesis for an integer $a$ ($\HRH(a)$ for short) states that, for all squarefree positive integers $m$, the Dedekind zeta function $Z_K$ of the number field $K := \mathbb{Q}(\zeta_m,\!\! \sqrt[m]{a})$, where $\zeta_m$ is a primitive $m$-th root of unity, satisfies the Riemann hypothesis, that is, if $Z_K(s) = 0$ for some $s \in \mathbb{C}$, with $0 < \Re(s) < 1$, then $\Re(s) = 1/2$.
We assumed $\HRH(g_0)$, where $g_0$ is an integer depending on $g$, to use some deep results of Frei, Koymans, and Sofos~\cite{FKS2019} (Theorem~\ref{thm:FKS1} and Theorem~\ref{thm:FKS2} below) concerning sums of three prime numbers with prescribed primitive roots.
Except for that, our proof of Theorem~\ref{thm:main} employs only elementary methods.

Finding an unconditional proof of Theorem~\ref{thm:main} and determining the order of the additive basis of the set of base-$g$ Niven numbers are two natural problems.
We checked that every natural number not exceeding $10^9$ is the sum of at most two base-$10$ Niven numbers.

A related problem stems from considering the multiplicative analog of Niven numbers.
A natural number is said to be a \emph{base-$g$ Zuckerman number} if it is divisible by the product of its \mbox{base-$g$} digits.
De~Koninck and Luca~\cite{MR2298113} (see also~\cite{MR3734412} for the correction of a numerical error in \cite{MR2298113}), and Sanna~\cite{MR4181552} gave upper and lower bounds for the number of base-$g$ Zuckerman numbers not exceeding $x$.
In particular, there are at least $x^{0.122}$, and at most $x^{0.717}$, base-$10$ Zuckerman numbers not exceeding $x$, for every sufficiently large $x$.
A question is whether the set of base-$g$ Zuckerman numbers is an additive basis.
We checked that every natural number $n \neq 106$ not exceeding $10^9$ is the sum of at most four base-$10$ Zuckerman numbers.

\section{Preliminaries}

Throughout this section, let $g \geq 2$ be a fixed integer.
For every positive integer $n$, there are uniquely determined $d_1, \dots, d_\ell \in \{0, \dots, g - 1\}$, with $d_\ell \neq 0$, such that $n = \sum_{i=1}^\ell d_i g^{i - 1}$.
We~let $[n]_g := d_1, \dots, d_\ell$ (a string), $s_g(n) := \sum_{i = 1}^\ell d_i$, and $\ell_g(n) := \ell$.
Moreover, for two strings $a$ and $b$, we write $a \leq b$ if $a$ is a substring of $b$, and we let $a \mid b$ be the concatenation of $a$ and $b$.

We begin with two simple lemmas.

\begin{lemma}\label{lem:split}
Let $n$ and $s_1, \dots, s_v$ be positive integers such that $s_g(n) = s_1 + \cdots + s_v$ and $s_v > (g - 2)(v - 1)$.
Then there exist positive integers $n_1, \dots, n_v$ such that $[n]_g = [n_1]_g \mid \cdots \mid [n_v]_g$ and $|s_g(n_i) - s_i| \leq (g - 2)(v - 1)$ for $i=1,\dots,v$.
\end{lemma}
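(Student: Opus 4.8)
The plan is to realize the decomposition as a choice of $v-1$ cut points in the digit string of $n$. Writing $[n]_g = d_1, \dots, d_\ell$ and setting $P(j) := \sum_{k=1}^{j} d_k$ for the prefix digit sums (with $P(0) := 0$ and $P(\ell) = s_g(n)$), together with the target partial sums $T_i := s_1 + \cdots + s_i$, a decomposition $[n]_g = [n_1]_g \mid \cdots \mid [n_v]_g$ corresponds to indices $0 = j_0 < j_1 < \cdots < j_{v-1} < j_v = \ell$, where $n_i$ is the integer whose digit string is $d_{j_{i-1}+1}, \dots, d_{j_i}$. For each $n_i$ to be a genuine positive integer, its leading digit $d_{j_i}$ must be nonzero. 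Since $s_g(n_i) = P(j_i) - P(j_{i-1})$, writing $e_i := P(j_i) - T_i$ gives $s_g(n_i) - s_i = e_i - e_{i-1}$, so it suffices to choose the cuts so that $0 \le e_i \le (g-2)(v-1)$ for every $i$: then each consecutive difference, and hence each $|s_g(n_i) - s_i|$, is at most $(g-2)(v-1)$ (note $e_0 = 0$ and $e_v = 0$ automatically).

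I would choose the cuts greedily: given $j_{i-1}$, let $j_i$ be the smallest index $j > j_{i-1}$ with $d_j \neq 0$ and $P(j) \ge T_i$. By construction $e_i \ge 0$ and $d_{j_i} \neq 0$. To bound $e_i$ from above I would argue by induction that $e_i \le (g-2)\,i$, splitting into two cases according to the minimality of $j_i$. If some nonzero digit occurs strictly between $j_{i-1}$ and $j_i$, then the last such position $k^*$ satisfies $P(k^*) < T_i$ and all digits between $k^*$ and $j_i$ vanish, so $e_i = P(j_i) - T_i < d_{j_i} \le g-1$ and thus $e_i \le g-2$. Otherwise all digits between $j_{i-1}$ and $j_i$ vanish, so $P(j_i - 1) = P(j_{i-1})$ and $e_i = e_{i-1} + d_{j_i} - s_i \le e_{i-1} + (g-2)$, using $d_{j_i} \le g-1$ and $s_i \ge 1$. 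In either case $e_i \le e_{i-1} + (g-2)$, and the induction yields $e_i \le (g-2)\,i \le (g-2)(v-1)$ for $i \le v-1$.

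The remaining point --- and the place where the hypothesis $s_v > (g-2)(v-1)$ is essential --- is to check that the greedy process never runs out of room, i.e.\ that $j_i < \ell$ for every $i \le v-1$, so that the last block $[n_v]_g$ is nonempty. I would fold this into the same induction: since the top index $\ell$ always satisfies $d_\ell \neq 0$ and $P(\ell) = s_g(n) \ge T_i$, the index $j_i$ exists whenever $j_{i-1} < \ell$; and from $e_i \le (g-2)(v-1)$ and $T_i \le T_{v-1} = s_g(n) - s_v$ one gets $P(j_i) = T_i + e_i \le s_g(n) - s_v + (g-2)(v-1) < s_g(n) = P(\ell)$, forcing $j_i < \ell$. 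This is exactly the step that fails without the hypothesis: a long initial run of overshoot could otherwise push the cuts all the way to $\ell$ and leave no digits for the final block. With $j_{v-1} < \ell$ secured, setting $j_v = \ell$ and reading off $n_1, \dots, n_v$ finishes the proof; the case $v = 1$ is trivial since then $n_1 = n$ and the error bound is $0$. The main obstacle is precisely this interaction between the accumulated overshoot and the nonzero-leading-digit constraint, which is what makes the cumulative bound $(g-2)(v-1)$, rather than a constant, the natural one.
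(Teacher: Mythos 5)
Your proof is correct and takes essentially the same approach as the paper's: a greedy choice of cut points in the digit string, with minimality of each cut controlling the digit-sum error and the hypothesis $s_v > (g-2)(v-1)$ ensuring the final block is nonempty. The only bookkeeping difference is that you aim each cut at the cumulative target $T_i$ (so individual blocks may under- or overshoot, with errors tracked via $0 \le e_i \le (g-2)\,i$, and you must impose $d_{j_i} \neq 0$ explicitly), whereas the paper aims at the per-block target $s_i$, where minimality automatically forces a nonzero digit at each cut, each of the first $v-1$ blocks satisfies $s_i \le s_g(n_i) \le s_i + g - 2$, and the entire accumulated error lands in the last block --- both versions yield the stated bound.
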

\begin{proof}
If $v = 1$ then the claim follows by picking $n_1 := n$.
Hence, assume that $v \geq 2$.
Let $n = \sum_{j=1}^\ell d_j g^{j - 1}$, where $d_1, \dots, d_\ell \in \{0, \dots, g - 1\}$, with $d_\ell \neq 0$.
We construct $n_1, \dots, n_v$ in the following way: $n_1 := \sum_{j=1}^{\ell_1} d_j g^{j-1}$, where $\ell_1$ is the minimal integer in $[1, \ell]$ such that $\sum_{j=1}^{\ell_1} d_j \geq s_1$; then $n_2 := \sum_{j = \ell_1 + 1}^{\ell_2} d_j g^{j - \ell_1 - 1}$, where $\ell_2$ is the minimal integer in ${(\ell_1, \ell]}$ such that $\sum_{j=\ell_1+1}^{\ell_2} d_j \geq s_2$; and so on, up to $n_{v-1} := \sum_{j = \ell_{v-2} + 1}^{\ell_{v-1}} d_j g^{j-\ell_{v-2}-1}$, where $\ell_{v-1}$ is the minimal integer in ${(\ell_{v-2}, \ell]}$ such that $\sum_{j=\ell_{v-2} + 1}^{\ell_{v-1}} d_j \geq s_{v-1}$;
Finally, $n_v := \sum_{j=\ell_{v-1} + 1}^\ell d_j g^{j - \ell_{v-1} - 1}$.
From this construction, it follows that $s_i \leq s_g(n_i) \leq s_i + g - 2$ and, by induction, that
\begin{equation}\label{equ:sjdisums}
\sum_{j \,=\, i + 1}^v s_j -(g-2)i \leq \sum_{j\,=\,\ell_i + 1}^\ell d_j \leq \sum_{j \,=\, i + 1}^v s_j,
\end{equation}
for $i=1,\dots,v-1$.
In fact, the first inequality in~\eqref{equ:sjdisums} and $s_v \geq (g-2)(v-2)$ ensure that each $\ell_1, \dots, \ell_{v-1}$ is well defined.
Moreover, \eqref{equ:sjdisums} with $i=v-1$ yields $s_v - (g - 2)(v - 1) \leq s_g(n_v) \leq s_v$.
In particular, $n_v > 0$ since $s_v > (g - 2)(v - 1)$.
Lastly, by the minimality of each $\ell_i$, it follows that $d_{\ell_i} \neq 0$, so that $[n_i]_g = d_{\ell_{i - 1}+1}, \dots, d_{\ell_i}$, for $i=1,\dots,v$, where $\ell_0 := 0$ and $\ell_v := \ell$.
Consequently, $[n]_g = [n_1]_g \mid \cdots \mid [n_v]_g$ and the proof is complete.
\end{proof}

\begin{lemma}\label{lem:join}
Let $n$ and $n_1, \dots, n_v$ be positive integers such that $[n]_g = [n_1]_g \mid \cdots \mid [n_v]_g$ and $n_i$ is the sum of $t_i$ base-$g$ Niven numbers for $i=1,\dots,v$.
Then $n$ is the sum of $t_1 + \cdots + t_v$ base-$g$ Niven numbers.
\end{lemma}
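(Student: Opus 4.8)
The plan is to realize $n$ explicitly as a sum of $t_1 + \cdots + t_v$ base-$g$ Niven numbers by exploiting the fact that concatenating digit strings corresponds to shifting by powers of $g$. First I would translate the hypothesis $[n]_g = [n_1]_g \mid \cdots \mid [n_v]_g$ into an arithmetic identity. Writing $L_0 := 0$ and $L_i := \ell_g(n_1) + \cdots + \ell_g(n_i)$ for the cumulative block lengths, the positional meaning of concatenation gives
\[
n = \sum_{i=1}^v n_i \, g^{L_{i-1}},
\]
since each block $[n_i]_g$ occupies the digit positions $L_{i-1}, \dots, L_i - 1$ of $n$, and shifting the string $[n_i]_g$ up by $L_{i-1}$ positions multiplies its value by $g^{L_{i-1}}$. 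Here I would note that no block loses digits under concatenation, because by definition of $[\,\cdot\,]_g$ each $[n_i]_g$ has nonzero leading digit; in particular, the digit sum is additive, $s_g(n) = s_g(n_1) + \cdots + s_g(n_v)$.

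Next I would unpack the hypothesis that each $n_i$ is a sum of $t_i$ base-$g$ Niven numbers, say $n_i = m_{i,1} + \cdots + m_{i,t_i}$, and substitute into the identity above to obtain
\[
n = \sum_{i=1}^v \sum_{k=1}^{t_i} m_{i,k} \, g^{L_{i-1}} .
\]
This exhibits $n$ as a sum of exactly $t_1 + \cdots + t_v$ terms, so the only thing left is to verify that each summand $m_{i,k} \, g^{L_{i-1}}$ is itself a base-$g$ Niven number. For that, the key observation is that multiplying by $g^{L_{i-1}}$ merely appends $L_{i-1}$ zeros to the base-$g$ representation, hence leaves the digit sum unchanged: $s_g(m_{i,k} \, g^{L_{i-1}}) = s_g(m_{i,k})$. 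Since $m_{i,k}$ is Niven, $s_g(m_{i,k})$ divides $m_{i,k}$ and therefore also divides $m_{i,k} \, g^{L_{i-1}}$, so each summand is Niven and the count is correct.

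There is no serious obstacle, as the statement is essentially positional bookkeeping; the one point requiring care is making the passage from the string identity $[n]_g = [n_1]_g \mid \cdots \mid [n_v]_g$ to the numerical identity $n = \sum_i n_i \, g^{L_{i-1}}$ fully rigorous, since this is precisely where the base-$g$ structure enters and where one must confirm that concatenation neither merges nor drops digits across block boundaries.
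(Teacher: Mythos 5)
Your proof is correct and takes essentially the same route as the paper: the paper's one-line proof rests on exactly the observation you verify in detail, namely that if $m$ is a base-$g$ Niven number then so is $g^i m$, since appending zeros leaves the digit sum unchanged. Your explicit identity $n = \sum_{i=1}^v n_i\, g^{L_{i-1}}$ (consistent with the paper's least-significant-digit-first convention for $[n]_g$, as seen in the construction in Lemma~\ref{lem:split}) is just the positional bookkeeping the paper leaves implicit.
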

\begin{proof}
The claim follows easily after noticing that if $m$ is a base-$g$ Niven number then $g^i m$ is a base-$g$ Niven number for every integer $i \geq 0$.
\end{proof}

The next theorem is a result of additive combinatorics due to Dias~da~Silva and Hamidoune~\cite{MR1272299}.
For every integer $h \geq 1$ and every subset $\mathcal{A}$ of an additive abelian group, let $h^\wedge \mathcal{A}$ denote the set of the sums of $h$ pairwise distinct elements of $\mathcal{A}$, that is, $h^\wedge \mathcal{A} := \big\{\!\sum_{a \in \mathcal{A}^\prime} a : \mathcal{A}^\prime \subseteq \mathcal{A},\, |\mathcal{A}^\prime| = h\big\}$.

\begin{thm}\label{thm:hwedgeA}
Let $h$ be a positive integer, let $p$ be a prime number, and let $\mathcal{A} \subseteq \mathbb{F}_p$.
Then
\begin{equation*}
\left|h^\wedge \mathcal{A}\right| \geq \min\!\big\{p, h|\mathcal{A}| - h^2 + 1 \big\} .
\end{equation*}
In particular, if $|\mathcal{A}| \geq \tfrac1{h}(p - 1) + h$ then $h^\wedge \mathcal{A} = \mathbb{F}_p$.
\end{thm}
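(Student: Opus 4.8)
The plan is to prove this by the polynomial method, following Alon, Nathanson, and Ruzsa, which gives a clean route to the Erd\H{o}s--Heilbronn bound established by Dias da Silva and Hamidoune; the key tool is Alon's Combinatorial Nullstellensatz. First note that the ``in particular'' clause is immediate: if $|\mathcal{A}| \geq \tfrac1{h}(p-1) + h$ then $h|\mathcal{A}| - h^2 + 1 \geq p$, so the main inequality forces $|h^\wedge\mathcal{A}| \geq p$ and hence $h^\wedge\mathcal{A} = \mathbb{F}_p$. It thus remains to establish the inequality. Writing $k := |\mathcal{A}|$, the cases $k \leq h$ are trivial (the right-hand side is then at most $1$), so I would assume $k > h \geq 1$ and suppose, for contradiction, that $m := |h^\wedge\mathcal{A}| \leq \min\{p - 1,\, hk - h^2\}$.

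Next I would introduce the polynomial
\[
F(x_1, \dots, x_h) := \prod_{1 \leq i < j \leq h}(x_j - x_i) \cdot \prod_{c \,\in\, h^\wedge\mathcal{A}}\Big(\textstyle\sum_{i=1}^h x_i - c\Big),
\]
and check that it vanishes identically on $\mathcal{A}^h$: at a point with two equal coordinates the Vandermonde factor vanishes, whereas at a point with pairwise distinct coordinates the sum $\sum_i x_i$ lies in $h^\wedge\mathcal{A}$, killing the second factor. Since $\deg F = \binom{h}{2} + m$, the strategy is to exhibit a monomial $x_1^{t_1} \cdots x_h^{t_h}$ of maximal degree $\binom{h}{2} + m$ with each $t_i \leq k - 1$ and coefficient nonzero in $\mathbb{F}_p$; the Combinatorial Nullstellensatz, applied with $S_i := \mathcal{A}$ so that $|S_i| = k > t_i$, would then produce a point of $\mathcal{A}^h$ at which $F$ does not vanish, the desired contradiction.

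The heart of the matter, and the step I expect to be the main obstacle, is locating such a monomial and showing its coefficient is a unit mod $p$. The top-degree part of $F$ is $\big(\sum_i x_i\big)^m \prod_{i<j}(x_j - x_i)$, and I would analyze it through the Schur-function expansion of the power sum, $\big(\sum_i x_i\big)^m = \sum_{\lambda \,\vdash\, m} f^\lambda\, s_\lambda$, where $f^\lambda$ is the number of standard Young tableaux of shape $\lambda$. Multiplying by the Vandermonde turns each $s_\lambda$ into the alternant $\det\big(x_i^{\lambda_j + h - j}\big)$, so the coefficient of the strictly decreasing monomial with exponents $t_i := \lambda_i + h - i$ is exactly $\pm f^\lambda$, only the single partition $\lambda$ contributing (the assignment $\lambda \mapsto \{\lambda_j + h - j\}$ being injective). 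Since $m \leq hk - h^2 = h(k - h)$, one may choose $\lambda$ inside the $h \times (k-h)$ box, which forces $\ell(\lambda) \leq h$ (so $f^\lambda \neq 0$) and $t_1 = \lambda_1 + h - 1 \leq k - 1$; as $t_1 \geq t_2 \geq \cdots$, all exponents are admissible.

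Finally, to pass from $\mathbb{Z}$ to $\mathbb{F}_p$ I would invoke the hook length formula $f^\lambda = m! / \prod(\text{hook lengths})$: since $m \leq p - 1$ we have $p \nmid m!$, hence $p \nmid f^\lambda$, so the chosen coefficient is nonzero in $\mathbb{F}_p$. This completes the contradiction and yields $m \geq \min\{p,\, hk - h^2 + 1\}$, as required. An alternative to the symmetric-function machinery is to compute the coefficient directly as a signed sum of multinomial coefficients, but the Schur/hook-length route makes the crucial non-divisibility by $p$ most transparent.
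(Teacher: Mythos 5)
Your proposal is correct, but be aware that the paper contains no proof of this theorem to compare against: it is quoted as a black box from Dias da Silva and Hamidoune, whose original argument is linear-algebraic, running through exterior algebra (eigenvalues of ``cyclic spaces for Grassmann derivatives'' acting on wedge powers). What you have reconstructed is the later Alon--Nathanson--Ruzsa proof via the Combinatorial Nullstellensatz, and every step checks out: the ``in particular'' clause does follow since $|\mathcal{A}| \geq \tfrac1h(p-1)+h$ gives $h|\mathcal{A}| - h^2 + 1 \geq p$; the cases $k \leq h$ are indeed trivial; $F$ vanishes on $\mathcal{A}^h$ for the two reasons you state; the chosen monomial has total degree $\binom{h}{2} + m = \deg F$, so only the top homogeneous part $\big(\sum_i x_i\big)^m \prod_{i<j}(x_j - x_i)$ contributes, and the expansion $\big(\sum_i x_i\big)^m = \sum_{\lambda \vdash m} f^\lambda s_\lambda$ combined with the bialternant identity $s_\lambda \cdot a_\delta = a_{\lambda+\delta}$ correctly isolates the coefficient $\pm f^\lambda$ (the map $\lambda \mapsto (\lambda_j + h - j)_j$ is injective, so no other partition contributes); the hypothesis $m \leq hk - h^2 = h(k-h)$ is exactly what lets $\lambda$ fit in the $h \times (k-h)$ box, forcing $\ell(\lambda) \leq h$ and $t_i \leq t_1 \leq k-1 < |\mathcal{A}|$; and $m \leq p-1$ plus the hook length formula gives $p \nmid f^\lambda$, since $p \mid f^\lambda$ would force $p \mid m!$. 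As for what each approach buys: the exterior-algebra method came first and resolved the Erd\H{o}s--Heilbronn conjecture, while the polynomial method you use is shorter, essentially self-contained modulo the Nullstellensatz, and generalizes flexibly (distinct summand sets, other polynomial restrictions on the summands). For the purposes of this paper, where the theorem is only invoked once in Lemma~\ref{lem:root} to conclude $h^\wedge\mathcal{A} = \mathbb{F}_p$, your proof would serve as a complete and correct substitute for the citation.
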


The next lemma shows that every positive integer whose sum of digits satisfies certain properties can be written as the sum of a bounded number of Niven numbers.
Hereafter, write $g = g_0^{2^u}$, where $g_0 \geq 2$ is a nonsquare integer and $u \geq 0$ is an integer.

\begin{lemma}\label{lem:root}
If $n$ is a positive integer such that:
\begin{enumerate}[(i)]
\item $s_g(n) = p + h$ for a prime number $p$ and an integer $h \in [4g, 8g]$;
\item $g_0$ is a primitive root modulo $p$; and
\item $s_g(n) > \max\!\big\{\tfrac{g - 1}{3}\, \ell_g(n), 140g^3 \big\}$;
\end{enumerate}
then $n$ is the sum of at most $8g + 1$ base-$g$ Niven numbers.
\end{lemma}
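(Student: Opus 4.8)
The plan is to write $n = M + R$, where $R$ is a number each of whose base-$g$ digits is bounded above by the corresponding digit of $n$ (so that no borrows occur and $M := n - R$ is a genuine base-$g$ number with digits $d_j - d_j^{(R)} \ge 0$). I would arrange that $s_g(R) = h$ and $R \equiv n \pmod{p}$. Then $M$ has digit sum $s_g(n) - h = p$ and satisfies $M \equiv n - R \equiv 0 \pmod{p}$, so $M$ is divisible by its digit sum $p$ and is therefore a base-$g$ Niven number. Moreover I would take $R$ of the special shape $R = \sum_{i=1}^{h} g^{j_i - 1}$ for $h$ distinct positions $j_1, \dots, j_h$ at which $n$ has a nonzero digit; each summand $g^{j_i - 1}$ has digit sum $1$ and is thus trivially a Niven number. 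This exhibits $n$ as a sum of $1 + h \le 8g + 1$ base-$g$ Niven numbers, which is the claim.

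Everything therefore reduces to producing $h$ distinct positions $j_1, \dots, j_h$ with $d_{j_i} \ge 1$ and $\sum_{i=1}^{h} g^{j_i - 1} \equiv n \pmod{p}$. This is where I would invoke Theorem~\ref{thm:hwedgeA}. Let $P := \{\, j : 1 \le j \le \ell_g(n),\ d_j \ge 1 \,\}$ and consider $\mathcal{A} := \{\, g^{j-1} \bmod p : j \in P \,\} \subseteq \mathbb{F}_p$. Since a position determines its residue, distinct residues come from distinct positions; hence it suffices to show $n \bmod p \in h^\wedge \mathcal{A}$, because a representation $n \equiv \rho_1 + \cdots + \rho_h$ with pairwise distinct $\rho_i \in \mathcal{A}$ yields, after selecting one position $j_i \in P$ with $g^{j_i-1} \equiv \rho_i$, exactly the required (automatically distinct) positions. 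By the second part of Theorem~\ref{thm:hwedgeA}, it is enough to prove the size estimate $|\mathcal{A}| \ge \tfrac1h(p-1) + h$.

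Establishing $|\mathcal{A}| \ge \tfrac1h(p-1) + h$ is the technical heart, and the main obstacle, because the residues $g^{j-1} \bmod p$ are periodic with period $t := \operatorname{ord}_p(g)$, so many positions may share one residue and zero digits can hide residues altogether. I would control $|\mathcal{A}|$ from below using all three hypotheses: from (ii), since $g_0$ is a primitive root and $g = g_0^{2^u} \ge 2^{2^u}$ gives $2^u \le \log_2 g$, the order satisfies $t = (p-1)/\gcd(p-1,2^u) \ge (p-1)/\log_2 g$; from the trivial digit bound, $|P| \ge s_g(n)/(g-1) = (p+h)/(g-1)$; and from (iii), $\ell_g(n) < 3\,s_g(n)/(g-1)$, so nonzero positions account for more than a third of all positions. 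I would then split into two cases. If $\ell_g(n) \le t$, all positions carry distinct residues, so $|\mathcal{A}| = |P| \ge (p+h)/(g-1)$. If $\ell_g(n) > t$, each residue class modulo $t$ contains at most $\lceil \ell_g(n)/t \rceil \le 2\ell_g(n)/t$ positions, whence $|\mathcal{A}| \ge |P|/\lceil \ell_g(n)/t \rceil \ge t/6$ after using the density bound. In either case a short computation, using $p + h = s_g(n) > 140 g^3$ together with $h \in [4g, 8g]$ (so that the term $\tfrac1h(p-1) \le \tfrac1{4g}(p-1)$ stays small while the correction $+h$ stays bounded), yields $|\mathcal{A}| \ge \tfrac1h(p-1) + h$; the constant $140 g^3$ is exactly what is needed to make these inequalities hold for every $g \ge 2$.
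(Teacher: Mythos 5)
Your proposal is correct and follows essentially the same route as the paper: you peel off $h$ distinct powers $g^{j_i-1}$ at nonzero digit positions chosen via the Dias~da~Silva--Hamidoune theorem so that the remainder $m$ has digit sum exactly $p$ and is divisible by $p$, and you lower-bound $|\mathcal{A}|$ by counting residue classes modulo the order $t$ of $g$, using hypothesis (ii) to bound $t$ from below and (iii) to guarantee that nonzero positions are dense. The only deviations are cosmetic: you split into the cases $\ell_g(n) \leq t$ and $\ell_g(n) > t$ where the paper runs a single chain of inequalities, and you use the slightly sharper bound $2^u \leq \log_2 g$ where the paper settles for $2^u < g$ (both your final estimates check out against $s_g(n) > 140g^3$).
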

\begin{proof}
Put $\ell := \ell_g(n)$, $s := s_g(n)$, and write $n = \sum_{i = 0}^{\ell - 1} d_i g^i$, where $d_0, \dots, d_{\ell-1} \in \{0, \dots, g-1\}$.
Also, let $\mathcal{I} := \big\{i \in \{0, \dots, \ell - 1\big\} : d_i \neq 0\}$ and $\mathcal{I}^\prime := \{i \bmod t : i \in \mathcal{I}\}$, where $t$ is the multiplicative order of $g$ modulo $p$.
By (ii) and recalling that $g = g_0^{2^u}$, we get that $t \geq (p - 1) / 2^u > (p - 1) / g$.
Hence,
\begin{equation*}
|\mathcal{I}| = \sum_{i^\prime \in\, \mathcal{I}^\prime} |\{i \in \mathcal{I} : {i \equiv i^\prime} \!\!\!\!\mod t\}| < |\mathcal{I}^\prime| \left(\frac{\ell}{t} + 1\right) < |\mathcal{I}^\prime| \left(\frac{g \ell}{p - 1} + 1\right).
\end{equation*}
Since $g$ modulo $p$ has order $t$, letting $\mathcal{A} := \{g^i \!\!\!\mod p : i \in \mathcal{I}\} \subseteq \mathbb{F}_p$ we have
\begin{align*}
|\mathcal{A}| &= |\mathcal{I}^\prime| > \frac{|\mathcal{I}|}{\frac{g \ell}{p - 1} + 1} \geq \frac{\frac{s}{g - 1}}{\frac{g \ell}{p - 1} + 1} > \frac{\frac{s}{g - 1}}{\frac{3gs}{(g - 1)(p - 1)} + 1} \\
 &= \frac{(p - 1)s}{3gs + (g - 1)(p - 1)} > \frac{p - 1}{4g - 1} > \frac{p - 1}{4g} + 8g \geq \frac{p - 1}{h} + h ,
\end{align*}
where we used the inequalities $|\mathcal{I}| \geq \tfrac{s}{g - 1}$, $\ell < \tfrac{3}{g - 1} s$, $s > p - 1 > 128g^3$; of which the last three are consequences of (i) and (iii).
Hence, Theorem~\ref{thm:hwedgeA} yields that $h^\wedge \mathcal{A} = \mathbb{F}_p$.
In~particular, there exists $\mathcal{J} \subseteq \mathcal{I}$ such that $|\mathcal{J}| = h$ and $\sum_{i \in \mathcal{J}} g^i \equiv n \pmod p$.
As a consequence, letting $m := n - \sum_{i \in \mathcal{J}} g^i$, it follows easily that $s_g(m) = s - h = p$ and $m \equiv 0 \pmod p$, so that $m$ is a base-$g$ Niven number.
Thus $n = m + \sum_{i \in \mathcal{J}} g^i$ is the sum of $h + 1$ base-$g$ Niven numbers and, recalling that $h \leq 8g$, the proof is complete.
\end{proof}

For all integers $q > 0$ and $r$, let $\mathcal{S}_{q,r}$ be the set of of positive integers $n$ such that:
\begin{enumerate}[(S1)]
\item\label{ite:S1} $s_g(n) \equiv r \pmod q$;
\item\label{ite:S2} for all positive integers $m$ such that $[m]_g \leq [n]_g$ and $\ell_g(m) \geq 36 \log \ell_g(n)$, we have $s_g(m) > \tfrac{g - 1}{3} \ell_g(m)$.
\end{enumerate}
Recall that the \emph{lower asymptotic density} of a set of positive integers $\mathcal{A}$ is defined as the limit infimum of $|\mathcal{A} \cap [1, x]| / x$, as $x \to +\infty$.

\begin{lemma}\label{lem:Aqr}
Let $q > 0$ and $r$ be integers.
Then $\mathcal{S}_{q,r}$ has positive lower asymptotic density.
\end{lemma}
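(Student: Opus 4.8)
The plan is to estimate the density length by length. Fix an integer $\ell\ge 1$ and consider the uniform probability model in which an $\ell$-digit base-$g$ integer $n$ is obtained by drawing its digits $d_0,\dots,d_{\ell-1}$ independently, with $d_0,\dots,d_{\ell-2}$ uniform in $\{0,\dots,g-1\}$ and the leading digit $d_{\ell-1}$ uniform in $\{1,\dots,g-1\}$. Let $A_\ell$ be the event that $n$ satisfies (S1) and let $B_\ell$ be the event that $n$ satisfies (S2). I will prove that $\Pr[A_\ell]\to 1/q$ and $\Pr[B_\ell]\to 1$ as $\ell\to\infty$; since $\Pr[A_\ell\cap B_\ell]\ge \Pr[A_\ell]-\Pr[\overline{B_\ell}]$, this gives $\liminf_{\ell\to\infty}\Pr[A_\ell\cap B_\ell]\ge 1/q>0$. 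Hence there is an $\ell_0$ such that at least $\tfrac{1}{2q}(g-1)g^{\ell-1}$ of the $\ell$-digit integers lie in $\mathcal{S}_{q,r}$ for every $\ell\ge\ell_0$, and summing over $\ell_0\le \ell\le\lfloor\log_g x\rfloor$ (the boundedly many shorter integers contributing $O(1)$) yields $|\mathcal{S}_{q,r}\cap[1,x]|\ge \tfrac{1}{2qg}\,x-O(1)$, so $\mathcal{S}_{q,r}$ has lower asymptotic density at least $\tfrac{1}{2qg}>0$.

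For (S1), I use that the digit sum $s_g(n)=\sum_i d_i$ equidistributes modulo $q$. Since $\{0,1\}\subseteq\{0,\dots,g-1\}$, for every $k\not\equiv 0\pmod q$ the factor $\tfrac1g\sum_{d=0}^{g-1}e^{2\pi i k d/q}$ has modulus strictly less than $1$; raising it to the power $\ell-1$ and convolving with the independent leading digit shows that each residue class has probability $\tfrac1q+o(1)$. Thus $\Pr[A_\ell]\to 1/q$.

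The main work, and the main obstacle, is the estimate $\Pr[\overline{B_\ell}]\to 0$ for (S2). Fix a window $W$ of $k$ consecutive digit positions; the digits $d_j$ with $j\in W$ are independent, lie in $[0,g-1]$, and have mean at least $(g-1)/2$, so Hoeffding's inequality gives
\begin{equation*}
\Pr\!\Big[\sum_{j\in W} d_j \le \tfrac{g-1}{3}\,k\Big] \le \exp\!\left(-\frac{2\,\big(\tfrac{g-1}{6}k\big)^2}{k(g-1)^2}\right) = e^{-k/18}.
\end{equation*}
There are at most $\ell$ choices for the left endpoint of $W$, and for each the admissible lengths $k\ge 36\log\ell$ contribute a geometric series $\sum_{k\ge 36\log\ell}e^{-k/18}\le (1-e^{-1/18})^{-1}\,\ell^{-2}$, using $e^{-(36\log\ell)/18}=\ell^{-2}$. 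A union bound over all windows therefore shows that the probability that some window of length at least $36\log\ell$ has digit sum at most $\tfrac{g-1}{3}$ times its length is $O(\ell^{-1})$, with the implied constant depending only on $g$. Every $m$ with $[m]_g\le[n]_g$ and $\ell_g(m)\ge 36\log\ell_g(n)$ corresponds to such a window, and requiring the bound for all windows is only stronger than (S2); hence $\Pr[\overline{B_\ell}]=O(\ell^{-1})\to 0$.

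The delicate point is exactly the balance in the last step: the union ranges over $\Theta(\ell^2)$ windows, so the crude bound $\ell^2e^{-k/18}$ is worthless, and it is essential to sum the geometric series in the window length first, which reduces the effective count to the $\Theta(\ell)$ left endpoints. The constant $36$ in the definition of $\mathcal{S}_{q,r}$ is tuned precisely so that the Hoeffding exponent $k/18$ evaluated at $k=36\log\ell$ produces the factor $\ell^{-2}$ needed to beat these $\ell$ endpoints. A final minor point is the passage from the independent-digit model (with the leading digit treated separately) to the exact count of $\ell$-digit integers, but this affects the probabilities only by bounded factors and does not disturb the limits above.
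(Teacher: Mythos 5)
Your proof is correct, and its core coincides with the paper's: the paper also works in the independent-digit model and establishes (S2) by exactly your Hoeffding bound $e^{-k/18}$, summing the geometric series over window lengths $k \geq 36\log\ell$ first to reduce the union bound to the $O(\ell)$ left endpoints, giving a failure probability $O(1/\ell)$. The only divergence is in (S1): where you prove full equidistribution $\Pr[s_g(n) \equiv r \pmod q] \to 1/q$ via the character-sum factorization $\big|\tfrac1g\sum_{d=0}^{g-1}e^{2\pi i kd/q}\big| < 1$, the paper avoids Fourier analysis entirely with a one-line conditioning trick --- splitting off the last $q$ digits and observing that, whatever the residue of the sum of the first $\ell - q$ digits, some assignment of the remaining $q$ digits (with nonzero leading digit) lands in the right class, giving the cruder bound $P_1 \geq g^{-q}$. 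Your route buys a better density constant ($\geq \tfrac{1}{2qg}$ rather than the paper's $\tfrac{1}{2g^{q+1}}$); the paper's buys brevity and keeps the argument purely combinatorial. Both are complete proofs, and your handling of the non-uniform leading digit (mean $\geq (g-1)/2$, range within $[0,g-1]$, so Hoeffding still gives $e^{-k/18}$) correctly patches the one spot where your model differs from the paper's uniform draw on $\{0,\dots,g^\ell-1\}$.
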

\begin{proof}
Let $\ell > q$ be an integer and let $n$ be a uniformly distributed random integer in $\{0, \dots, g^\ell - 1\}$.
Then $n = \sum_{i = 1}^\ell d_i g^{i-1}$, where $d_1, \dots, d_\ell$ are independent uniformly distributed random variables in $\{0,\dots,g - 1\}$.
On the one hand, we have
\begin{align*}
P_1 &:= \Pr\big[\ell_g(n) = \ell \text{ and } s_g(n) \equiv r \!\!\!\!\pmod q\big] = \Pr\left[d_\ell \neq 0 \text{ and } \sum_{i \,=\, 1}^\ell d_i \equiv r \!\!\!\pmod q \right] \\
 &= \sum_{s \,=\, 0}^{q - 1} \Pr\left[d_\ell \neq 0 \,\text{ and }\!\! \sum_{i \,=\, \ell - q + 1}^\ell d_i \equiv r - s \!\!\!\pmod q \right] \cdot \Pr\left[\,\sum_{i \,=\, 1}^{\ell - q} d_i \equiv s \!\!\!\pmod q \right] \\
 &\geq \frac1{g^q} \sum_{s \,=\, 0}^{q - 1} \Pr\left[\,\sum_{i \,=\, 1}^{\ell - q} d_i \equiv s \!\!\!\pmod q \right] \geq \frac1{g^q} .
\end{align*}
On the other hand, by Hoeffding's inequality~\cite[Theorem~2]{MR144363}, we have
\begin{equation*}
\Pr\left[\sum_{j \,=\, 1}^k d_{i + j} \leq \frac{g - 1}{3} k\right] \leq e^{-k / 18} ,
\end{equation*}
for all $k \in \{1, \dots, \ell\}$ and $i \in \{0, \dots, \ell - k\}$.
Hence, letting $y := 36\log \ell$, we get
\begin{align*}
P_2 &:= \Pr\left[\exists m \in \mathbb{N} \text{ s.t. } [m]_g \leq [n]_g, \; \ell_g(m) \geq y,\, s_g(m) \leq \tfrac{g-1}{3}\ell_g(m)\right] \\
&\leq \sum_{\substack{y \,\leq\, k \,\leq\, \ell \\[1pt] i \,\in\, \{0,\dots,\ell-k\}}} \Pr\left[\sum_{j \,=\, 1}^k d_{i + j} \leq \frac{g - 1}{3} k\right] \leq \ell \sum_{k \,\geq\, y} e^{-k/18} \leq \frac{\ell \, e^{-y / 18}}{1 - e^{-1/18}} < \frac{20}{\ell} \to 0 ,
\end{align*}
as $\ell \to +\infty$.
Therefore, for every sufficiently large $x > 0$, letting $\ell$ be the greatest integer such that $g^\ell \leq x$, we obtain
\begin{equation*}
\frac{\big|\mathcal{S}_{q,r} \cap [1, x]\big|}{x} > \frac{\big|\mathcal{S}_{q,r} \cap {[1, g^\ell)}\big|}{g^{\ell + 1}} \geq \frac{P_1 - P_2}{g} > \frac1{2g^{q + 1}} .
\end{equation*}
Hence, $\mathcal{S}_{q,r}$ has positive lower asymptotic density.
\end{proof}

The next result is an easy consequence of an important theorem of Schnirelmann.

\begin{thm}\label{thm:schnirelmann}
Let $\mathcal{A}$ be a set of positive integers such that $1 \in \mathcal{A}$.
If $\mathcal{A}$ has positive lower asymptotic density, then $\mathcal{A}$ is an additive basis.
\end{thm}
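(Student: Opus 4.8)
The plan is to reduce the statement to the classical theorem of Schnirelmann, which asserts that if a set $\mathcal{B}$ of nonnegative integers contains $0$ and has positive Schnirelmann density $\sigma(\mathcal{B}) := \inf_{n \geq 1} |\mathcal{B} \cap [1,n]|/n$, then $\mathcal{B}$ is an additive basis (see, e.g.,~\cite{MR1395371}). Granting this, the only thing I need to verify is that $\mathcal{A}$ has positive Schnirelmann density; adjoining $0$ to $\mathcal{A}$ changes neither its Schnirelmann density nor the representability of positive integers as sums, so this adjustment is harmless.

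To this end, I would write $A(x) := |\mathcal{A} \cap [1,x]|$ and set $\delta := \liminf_{x \to +\infty} A(x)/x$, which is positive by hypothesis. By the definition of the limit infimum, there exists an integer $N_0 \geq 1$ such that $A(x)/x \geq \tfrac{\delta}{2}$ for every real $x \geq N_0$. This controls $A(n)/n$ from below on the range $n \geq N_0$, but says nothing about small $n$, where the infimum defining the Schnirelmann density could a priori be small. The hypothesis $1 \in \mathcal{A}$ is exactly what covers this gap: it forces $A(n) \geq 1$ for every positive integer $n$, whence $A(n)/n \geq 1/n \geq 1/N_0$ for all $n \in \{1, \dots, N_0\}$. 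Combining the two ranges yields $\sigma(\mathcal{A}) = \inf_{n \geq 1} A(n)/n \geq \min\{1/N_0, \delta/2\} > 0$, and Schnirelmann's theorem then finishes the proof.

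I expect no serious obstacle, in line with the remark that this is an easy consequence of Schnirelmann's work; the single subtle point is that the lower asymptotic density is a purely asymptotic notion and gives no information for small $n$, whereas the Schnirelmann density is highly sensitive precisely there, and it is the assumption $1 \in \mathcal{A}$ that bridges this discrepancy. For completeness one could instead make the underlying machinery explicit—Schnirelmann's inequality $\sigma(\mathcal{B} + \mathcal{C}) \geq \sigma(\mathcal{B}) + \sigma(\mathcal{C}) - \sigma(\mathcal{B})\,\sigma(\mathcal{C})$ iterated to push $\sigma(h\mathcal{A})$ above $1/2$, followed by the elementary pigeonhole fact that $\sigma(\mathcal{B}) \geq 1/2$ together with $0 \in \mathcal{B}$ forces $\mathcal{B} + \mathcal{B}$ to contain every nonnegative integer—but invoking Schnirelmann's theorem directly is cleaner.
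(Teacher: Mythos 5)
Your proposal is correct and follows essentially the same route as the paper: both deduce positive Schnirelmann density from $1 \in \mathcal{A}$ together with the positive lower asymptotic density (you spell out the split between small $n$, handled by $1 \in \mathcal{A}$, and large $n$, handled by the liminf, which the paper merely asserts), and both then invoke Schnirelmann's theorem as stated in Nathanson, applied to $\mathcal{A} \cup \{0\}$.
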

\begin{proof}
Since $1 \in \mathcal{A}$ and ${\displaystyle\liminf_{n \,\to\, +\infty}}\frac{|\mathcal{A} \,\cap\, [1, n]|}{n} > 0$, it follows that ${\displaystyle\inf_{n \,\geq\, 1}}\frac{|\mathcal{A} \,\cap\, [1, n]|}{n} > 0$, that is, $\mathcal{A}$ has positive \emph{Schnirelmann density}.
Consequently, by Schnirelmann's Theorem~\cite[Theorem~7.7]{MR1395371}, it follows that $\mathcal{A}$ is an additive basis.
(Note that in~\cite{MR1395371} they say that $\mathcal{A}$ is a \emph{basis of finite order} if there exists a positive integer $k$ such that every natural number is the sum of \emph{exactly} $k$ elements of $\mathcal{A}$, 
and that~\cite[Theorem~7.7]{MR1395371} has to be applied to $\mathcal{A} \cup \{0\}$.)
\end{proof}

The following deep results of Frei, Koymans, and Sofos~\cite[Theorem~1.1 and Theorem~1.7]{FKS2019} are crucial to the proof of Theorem~\ref{thm:main}.

\begin{thm}\label{thm:FKS1}
Let $\mathbf{a} = (a_1, a_2, a_3) \in \mathbb{Z}^3$ such that no $a_i$ is $-1$ or a square. 
Assuming $\HRH(a_i)$ for $i = 1, 2, 3$, we have
\begin{equation*}
\sum_{\substack{n \,=\, p_1 + p_2 + p_3 \\[2pt] a_i \text{ primitive~root~mod } p_i, \\[2pt]\text{for }i\,=\,1,2,3.}} \, \prod_{i\,=\,1}^3 \log p_i \sim \curly{A}_{\mathbf{a}} (n)\, n^2 \quad \text{ as } n \to +\infty ,
\end{equation*}
with an explicit factor $\curly{A}_{\mathbf{a}} (n) \geq 0$ that satisfies $\curly{A}_{\mathbf{a}}(n) \geq A_\mathbf{a}$, whenever $\curly{A}_{\mathbf{a}} (n) > 0$, where $A_\mathbf{a} > 0$ is a constant depending only on $\mathbf{a}$.
\end{thm}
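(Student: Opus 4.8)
The plan is to treat this as Vinogradov's three-primes theorem carrying three independent Artin primitive-root constraints, by combining the Hardy--Littlewood circle method with Hooley's conditional resolution of Artin's conjecture; $\HRH(a_i)$ enters precisely to furnish Riemann-Hypothesis-quality error terms for the primes attached to the Kummer fields $K^{(a_i)}_{m} := \mathbb{Q}(\zeta_m, \sqrt[m]{a_i})$.

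First I would strip off the primitive-root conditions by M\"obius inversion in the Hooley style. For squarefree $m$, the event that $p \equiv 1 \pmod m$ and $a$ is an $m$-th power residue modulo $p$ is exactly the event that $p$ splits completely in $K^{(a)}_m$, and $a$ is a primitive root modulo $p$ if and only if this fails for every prime $m$. Writing $\mathbf{1}[a \text{ prim.\ root mod } p] = \sum_{m \geq 1} \mu(m)\, \mathbf{1}[p \text{ splits completely in } K^{(a)}_m]$ and inserting one such identity for each of $a_1, a_2, a_3$, the sum on the left of the theorem becomes
\begin{equation*}
S(n) = \sum_{m_1, m_2, m_3 \,\geq\, 1} \mu(m_1)\mu(m_2)\mu(m_3)\, T_{\mathbf{m}}(n), \qquad T_{\mathbf{m}}(n) := \sum_{\substack{p_1 + p_2 + p_3 \,=\, n \\[2pt] p_i \text{ splits in } K^{(a_i)}_{m_i}}} \prod_{i=1}^3 \log p_i ,
\end{equation*}
so that each $T_{\mathbf{m}}(n)$ is a ternary additive problem over primes confined to a prescribed Chebotarev class.

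Second I would estimate each $T_{\mathbf{m}}(n)$ by the circle method applied to the generating functions $f^{(a_i)}_{m_i}(\alpha) := \sum_{p \,\leq\, n,\ p \text{ splits in } K^{(a_i)}_{m_i}} (\log p)\, e(p\alpha)$ (with $e(x) := e^{2\pi i x}$), so that $T_{\mathbf{m}}(n) = \int_0^1 \prod_{i=1}^3 f^{(a_i)}_{m_i}(\alpha)\, e(-n\alpha)\, \mathrm{d}\alpha$. On the major arcs around each $b/q$ with small $q$ I would replace the splitting-plus-congruence condition by the Frobenius condition in the compositum $\mathbb{Q}(\zeta_{q m_i}, \sqrt[m_i]{a_i})$ and invoke the explicit formula under HRH to obtain the expected main term with a power-saving error; the product of the three local densities then emerges as an Euler product. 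On the minor arcs I would bound $\sup_\alpha |f^{(a_i)}_{m_i}(\alpha)|$ by Vinogradov's method via a Vaughan-type identity --- the splitting condition being detected by the characters cutting out $K^{(a_i)}_{m_i}$, so the resulting Type~I and Type~II sums remain accessible --- and combine this with the $L^2$-bound $\int_0^1 |f^{(a_i)}_{m_i}|^2\, \mathrm{d}\alpha \ll n \log n$ to make the minor-arc contribution negligible.

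Third, and this is where the main difficulty lies, I would have to carry out all of the above \emph{uniformly in} $\mathbf{m} = (m_1, m_2, m_3)$ and then sum the M\"obius series. The degree and conductor of $K^{(a_i)}_{m_i}$ grow with $m_i$, so the major-arc error terms must be explicit in $m_i$ and in $q$, and detecting $p \equiv b \pmod q$ simultaneously with splitting in $K^{(a_i)}_{m_i}$ really requires the Riemann Hypothesis for the composita $\mathbb{Q}(\zeta_{q m_i}, \sqrt[m_i]{a_i})$; reconciling this with $\HRH(a_i)$ as stated for $\mathbb{Q}(\zeta_m, \sqrt[m]{a})$ --- through Artin factorization of the zeta function of the compositum together with the Riemann Hypothesis for Dirichlet $L$-functions (itself a consequence of HRH) --- is the delicate bookkeeping that controls the tail $m_i \leq \sqrt{n}$ and guarantees that the doubly-averaged error is $o(n^2)$. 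Once the uniform estimates are in place, the main terms reassemble, after summing over $\mathbf{m}$, into $\curly{A}_{\mathbf{a}}(n)\, n^2$, where $\curly{A}_{\mathbf{a}}(n)$ is the product of the ternary-Goldbach singular series with the three Artin densities, expressed as a single Euler product of purely local factors. Finally, since each such local factor is determined by congruence data at a single prime, the vanishing of $\curly{A}_{\mathbf{a}}(n)$ can only come from a local obstruction (for instance a parity constraint on $n$), and inspecting the finitely many relevant local factors yields both the positivity when $\curly{A}_{\mathbf{a}}(n) > 0$ and the uniform lower bound $A_{\mathbf{a}} > 0$.
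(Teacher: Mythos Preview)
The paper does not prove this theorem at all: it is quoted verbatim as a deep result of Frei, Koymans, and Sofos \cite[Theorem~1.1]{FKS2019} and used as a black box. There is therefore no ``paper's own proof'' to compare your proposal against.

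That said, your outline is a faithful high-level sketch of the strategy actually carried out in \cite{FKS2019}: the Hardy--Littlewood circle method for the ternary problem, with the primitive-root condition unpacked via Hooley's M\"obius sieve over the Kummer extensions $\mathbb{Q}(\zeta_m,\sqrt[m]{a_i})$, and $\HRH(a_i)$ supplying the necessary uniformity in $m$. Your identification of the main difficulty --- controlling error terms uniformly in $\mathbf{m}$ so that the M\"obius series can be summed --- is exactly the heart of the matter. As a sketch it is plausible and points in the right direction, but it is only a plan: each of the three steps you describe (especially the minor-arc estimate with the splitting condition and the bookkeeping for the composita) is substantial, and turning the outline into an actual proof is the content of the cited paper rather than anything in the present one.
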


\begin{thm}\label{thm:FKS2}
Let $a \neq -1$ be a nonsquare integer.
Then $\curly{A}_{(a,a,a)}(n) > 0$ if and only if $(n \bmod  420) \in \mathcal{R}_a$, where $\mathcal{R}_a$ is a nonempty set of residues modulo $420$ depending only on $a$.
\end{thm}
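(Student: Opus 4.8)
The plan is to read the arithmetic of $\curly{A}_{(a,a,a)}(n)$ off the way it is produced in Theorem~\ref{thm:FKS1}. That asymptotic comes from a character-sum analysis of the equation $n = p_1 + p_2 + p_3$ combined with Hooley's conditional treatment of the three primitive-root conditions, and by construction the resulting main factor is an absolutely convergent Euler product of nonnegative local densities,
\[
\curly{A}_{(a,a,a)}(n) = c_a \prod_{\ell} \sigma_\ell(n),
\]
where $\ell$ runs over the primes, $c_a > 0$ depends only on $a$, and each $\sigma_\ell(n)$ depends on $n$ only through its residue modulo a bounded power $\ell^{k_\ell}$. Thus $\curly{A}_{(a,a,a)}(n) > 0$ holds if and only if $\sigma_\ell(n) > 0$ for every $\ell$, and the whole problem reduces to deciding, prime by prime, for which residues of $n$ the local factor is positive. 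Concretely, $\sigma_\ell(n)$ is, up to a positive normalisation, a weighted count of the solutions of $n \equiv b_1 + b_2 + b_3 \pmod{\ell^{k_\ell}}$ with each $b_i$ an \emph{admissible} residue, that is, one in which the primes $p$ having $a$ as a primitive root modulo $p$ occur with positive density; the weights are positive on the admissible residues. Hence $\sigma_\ell(n) > 0$ precisely when $n$ is a sum of three admissible residues.

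First I would dispose of all but finitely many primes. For a prime $\ell$ write $S_\ell \subseteq \mathbb{F}_\ell$ for the set of admissible residues. A nonzero $b$ fails to lie in $S_\ell$ only for one of two reasons: either $b \equiv 1 \pmod \ell$ while $a$ is a perfect $\ell$-th power (so that $a$ is an $\ell$-th power modulo every $p \equiv 1 \pmod \ell$, precluding the primitive-root condition), or $\ell$ divides the discriminant of $\mathbb{Q}(\sqrt{a})$ and the requirement $\left(\frac{a}{p}\right) = -1$ removes the classes $b$ with $\left(\frac{a}{p}\right) = +1$. The first excludes a single class and the second at most half of the nonzero classes, so $|S_\ell|$ grows linearly in $\ell$. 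Since the three primes $p_1, p_2, p_3$ may be congruent modulo $\ell$, repetitions are allowed, so the relevant tool is the ordinary sumset bound (Cauchy--Davenport), rather than the distinct-sum estimate of Theorem~\ref{thm:hwedgeA}: it gives $S_\ell + S_\ell + S_\ell = \mathbb{F}_\ell$ for all sufficiently large $\ell$, whence $\sigma_\ell(n) > 0$ for every $n$. A quantitative version of this argument identifies the exceptional set of primes precisely as $\{2, 3, 5, 7\}$.

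It then remains to analyse the finitely many local factors at $\ell \in \{2,3,5,7\}$. I would compute each $\sigma_\ell(n)$ explicitly, tracking the exact admissible set $S_\ell$ (which now depends on whether $a$ is a perfect $\ell$-th power and on the splitting of $p$ in $\mathbb{Q}(\sqrt{a})$) together with the precise local densities. The one feature absent from the odd primes occurs at $\ell = 2$: the quadratic character $\left(\frac{a}{\cdot}\right)$ is $2$-adically sensitive, but once the parity constraint forced by $n = p_1 + p_2 + p_3$ with odd primes is imposed, the residual dependence collapses to $n \bmod 4$, which is the origin of the factor $4$. Because quadratic reciprocity couples the residues of $p$ modulo $4, 3, 5, 7$, these four local conditions cannot be read off independently but only together, that is, as a single condition on $n$ modulo $\operatorname{lcm}(4,3,5,7) = 420$. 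Combining this with the large-prime analysis, $\curly{A}_{(a,a,a)}(n) > 0$ depends on $n$ only through $n \bmod 420$, and I would define $\mathcal{R}_a$ to be the set of residues for which it is positive.

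Finally, the nonemptiness of $\mathcal{R}_a$ must be established, and this is where I expect the genuine work to lie. It is not enough that each $S_\ell$ be nonempty: one must exhibit a single residue $n_0 \bmod 420$ at which all four small-prime factors are simultaneously positive. For each odd $\ell$ this is easy in isolation, the only degenerate case being $\ell = 3$ with $a$ a perfect cube, where $S_3 = \{2\}$ forces $n_0 \equiv 0 \pmod 3$; in every case the conditions at $2, 3, 5, 7$ are individually satisfiable, and verifying their joint compatibility modulo $420$ is a finite check, organised by the few possibilities for whether $a$ is a perfect cube, fifth, or seventh power and for the class of $a$ in $\mathbb{Q}(\sqrt{a})$. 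This simultaneous compatibility across the coupled small primes, rather than the large-prime estimate, is the main obstacle. With $\mathcal{R}_a \neq \emptyset$ in hand, the equivalence $\curly{A}_{(a,a,a)}(n) > 0 \iff (n \bmod 420) \in \mathcal{R}_a$ is exactly the assertion of the theorem.
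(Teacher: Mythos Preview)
The paper does not prove this statement at all: Theorem~\ref{thm:FKS2} is quoted, without proof, as \cite[Theorem~1.7]{FKS2019} (together with Theorem~\ref{thm:FKS1} as \cite[Theorem~1.1]{FKS2019}), and is used only as a black box in the derivation of Corollary~\ref{cor:FKS}. There is therefore no ``paper's own proof'' to compare your proposal against.

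As for the content of your sketch, the broad strategy---write $\curly{A}_{(a,a,a)}(n)$ as an Euler product of nonnegative local densities, show the factor at every large prime is automatically positive, and reduce the positivity question to a finite computation at the small primes---is indeed the shape of the argument in \cite{FKS2019}. But several of your assertions are heuristic rather than proved: you do not justify why the cutoff is exactly at $\ell\in\{2,3,5,7\}$ (you merely assert that ``a quantitative version'' of Cauchy--Davenport yields this), your description of the admissible sets $S_\ell$ and of the $2$-adic analysis is imprecise, and the claimed ``coupling via quadratic reciprocity'' of the conditions at $3,5,7$ with the condition at $4$ is not how the modulus $420$ actually arises in \cite{FKS2019}. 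None of this matters for the present paper, which only needs the statement; but if you intended this as a standalone proof, it is a plausible outline with substantial gaps rather than a complete argument.
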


As a consequence, we obtain the following:

\begin{cor}\label{cor:FKS}
Let $a \neq -1$ be a nonsquare integer and let $\delta \in (0,1)$.
Assuming $\HRH(a)$, for every sufficiently large natural number $n$ such that $(n \bmod 420) \in \mathcal{R}_a$ there exist prime numbers $p_1, p_2, p_3 > n^\delta$ such that $n = p_1 + p_2 + p_3$ and $a$ is a primitive root modulo each of $p_1, p_2, p_3$.
\end{cor}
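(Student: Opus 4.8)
The plan is to apply Theorem~\ref{thm:FKS1} to the triple $\mathbf{a} = (a,a,a)$, which is admissible since $a \neq -1$ is a nonsquare integer, and since $\HRH(a)$ supplies the hypothesis $\HRH(a_i)$ for each coordinate. Writing
\begin{equation*}
W(n) := \sum_{\substack{n \,=\, p_1 + p_2 + p_3 \\[2pt] a \text{ prim.~root mod } p_i,\, i = 1,2,3}} \prod_{i=1}^3 \log p_i ,
\end{equation*}
Theorem~\ref{thm:FKS1} gives $W(n) = \curly{A}_{(a,a,a)}(n)\, n^2 (1 + o(1))$. If $(n \bmod 420) \in \mathcal{R}_a$, then Theorem~\ref{thm:FKS2} yields $\curly{A}_{(a,a,a)}(n) > 0$, whence $\curly{A}_{(a,a,a)}(n) \geq A := A_{(a,a,a)} > 0$ by the uniform lower bound of Theorem~\ref{thm:FKS1}. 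Therefore, for every sufficiently large $n$ in this residue class, $W(n) \geq \tfrac12 \curly{A}_{(a,a,a)}(n)\, n^2 \geq \tfrac12 A\, n^2 > 0$, which already produces a representation $n = p_1 + p_2 + p_3$ with $a$ a primitive root modulo each $p_i$.

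The only feature not delivered directly by this argument is the requirement $p_i > n^\delta$, so the crux is to show that the representations in which some $p_i$ is small contribute negligibly to $W(n)$. Accordingly, I would bound the weighted count $W_{\mathrm{small}}(n)$ of all triples $(p_1,p_2,p_3)$ with $p_1 + p_2 + p_3 = n$ and $\min_i p_i \leq n^\delta$, now dropping the primitive-root conditions since doing so only enlarges the sum. By symmetry it suffices to control $\sum_{p_1 \leq n^\delta} \log p_1 \sum_{p_2 + p_3 = n - p_1} \log p_2 \log p_3$ and multiply by $3$.

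The main obstacle is this inner Goldbach-type sum, which I would estimate by a standard upper-bound sieve (Selberg, or the Brun--Titchmarsh inequality): for every $m \leq n$,
\begin{equation*}
\sum_{p_2 + p_3 = m} \log p_2 \log p_3 \;\ll\; m \prod_{p \,\mid\, m} \Big(1 + \tfrac1p\Big) \;\ll\; n \log\log n ,
\end{equation*}
uniformly. Combining this with Chebyshev's bound $\sum_{p_1 \leq n^\delta} \log p_1 \ll n^\delta$ gives $W_{\mathrm{small}}(n) \ll n^{1+\delta} \log\log n = o(n^2)$, the key point being that $\delta < 1$. Consequently, the weighted count of admissible representations with all $p_i > n^\delta$ equals $W(n) - W_{\mathrm{small}}(n) \geq \tfrac12 A\, n^2 - o(n^2) > 0$ for every sufficiently large $n$ with $(n \bmod 420) \in \mathcal{R}_a$. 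Positivity of this count guarantees the existence of at least one such representation, which is exactly the assertion of the corollary.
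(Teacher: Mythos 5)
Your proposal is correct and takes essentially the same route as the paper: both extract the main term from Theorem~\ref{thm:FKS1} together with Theorem~\ref{thm:FKS2}, and then show that representations in which some prime is at most $n^\delta$ contribute $o(n^2)$ to the weighted count. The only difference is in that last estimate, where your Selberg-sieve bound on the binary Goldbach sum is correct but unnecessarily strong: since $p_3$ is determined by $p_1$ and $p_2$, the paper simply bounds the excluded contribution by $n^{\delta+1}(\log n)^3 = o(n^2)$, using nothing beyond $\delta < 1$.
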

\begin{proof}
The claim follows easily from Theorem~\ref{thm:FKS1} and Theorem~\ref{thm:FKS2} by noticing that
\begin{align*}
\sum_{\substack{n \,=\, p_1 + p_2 + p_3 \\[1pt] p_1 \,\leq\, n^{\delta}}} \prod_{i \,=\, 1}^3 \log p_i \leq \sum_{\substack{p_1 \,\leq\, n^\delta \\[1pt] \!\!\!p_2 \,\leq\, n}} (\log n)^3 \leq n^{\delta + 1} (\log n)^3 = o(n^2) ,
\end{align*}
as $n \to +\infty$.
\end{proof}

\section{Proof of Theorem~\ref{thm:main}}

Let $g \geq 2$ be an integer, write $g = g_0^{2^u}$, where $g_0 \geq 2$ is a nonsquare integer and $u \geq 0$ is an integer, and assume that $\HRH(g_0)$ holds.
Put $q := 420$ and $r := r^\prime + 18g$, where $r^\prime$ is any fixed element of $\mathcal{R}_{g_0}$, and let $\mathcal{A} := \mathcal{S}_{q,r}$ so that, thanks to Lemma~\ref{lem:Aqr}, $\mathcal{A}$ has positive lower asymptotic density.
By Theorem~\ref{thm:schnirelmann}, we have that $\mathcal{A} \cup \{1\}$ is an additive basis.

Now, in order to prove Theorem~\ref{thm:main}, it suffices to show that every sufficiently large (depending only on $g$) element of $\mathcal{A}$ is the sum of a bounded number (depending only on $g$) of base-$g$ Niven numbers.
Let $n \in \mathcal{A}$ be sufficiently large, and let $\ell := \ell_g(n)$, $s := s_g(n)$, and $s^\prime := s - 18g$.

Clearly, $\ell \to +\infty$ as $n \to +\infty$.
In particular, $\ell \geq 36 \log \ell$ for every sufficiently large $n$.
Hence, from~(\hyperref[ite:S2]{S2}) with $m = n$, we get that $s > \tfrac{g - 1}{3} \ell$ and $s^\prime > \tfrac{g - 1}{3}\ell - 18g$.
Consequently, in what follows, we can assume that $\ell, s, s^\prime$ are sufficiently large.

By~(\hyperref[ite:S1]{S1}), we have $s^\prime \equiv s - 18g \equiv r - 18g \equiv r^\prime \pmod q$, so that $(s^\prime \bmod 420) \in \mathcal{R}_{g_0}$ and $s^\prime$ is sufficiently large.
Hence, by Corollary~\ref{cor:FKS}, there exist prime numbers $p_1, p_2, p_3 > \sqrt{s^\prime}$ such that $s^\prime = p_1 + p_2 + p_3$ and $g_0$ is a primitive root modulo each of $p_1, p_2, p_3$.

As a consequence, $s = s_1 + s_2 + s_3$ where $s_i := p_i + 6g$ for $i=1,2,3$.
Hence, by Lemma~\ref{lem:split}, there exist positive integers $n_1, n_2, n_3$ such that $[n]_g = [n_1]_g \mid [n_2]_g \mid [n_3]_g$ and $|s_g(n_i) - s_i| \leq 2(g - 2)$ for $i=1,2,3$.
In particular, $s_g(n_i) = p_i + h_i$ for some integer $h_i \in [4g, 8g]$.
Note that $[n_i]_g \leq [n]_g$ and
\begin{equation*}
\ell_g(n_i) \geq \frac{s_g(n_i)}{g - 1} > \frac{p_i}{g - 1} > \frac{\sqrt{s^\prime}}{g - 1} \geq \frac{\sqrt{\tfrac{g-1}{3}\ell - 9g}}{g - 1} > 36 \log \ell .
\end{equation*}
Therefore, from~(\hyperref[ite:S2]{S2}) it follows that $s_g(n_i) > \tfrac{g - 1}{3}\ell_g(n_i)$.

Thus we have proved that each $n_i$ satisfies the hypotheses of Lemma~\ref{lem:root}, and consequently each $n_i$ is the sum of at most $8g + 1$ base-$g$ Niven numbers.
Then, from Lemma~\ref{lem:join}, it follows that $n$ is the sum of at most $24g + 3$ base-$g$ Niven numbers.

The proof is complete.

\begin{rem}\label{rmk:strong}
An inspection of the proof of Theorem~\ref{thm:main}, in particular Lemma~\ref{lem:root}, shows that, actually, we proved a stronger result:
Assuming $\HRH(g_0)$, the union of $\{g^i : i=0,1,\dots\}$ and the set of base-$g$ Niven numbers $m$ such that $p = s_g(m)$ is a prime number and $g_0$ is a primitive root modulo $p$ is an additive basis.
\end{rem}

\section{Acknowledgements}

The computational resources were provided by \texttt{hpc@polito} (\texttt{http://www.hpc.polito.it}).
The author thanks Daniele Mastrostefano (University of Warwick) for suggestions that improved the paper.

\end{document}